\documentclass[12pt,twoside,reqno]{amsart}

\usepackage[colorlinks=true,citecolor=blue]{hyperref}
\usepackage{mathptmx, amsmath, amssymb, amsfonts, amsthm, mathptmx, enumerate, color,mathrsfs}
\usepackage{graphicx}

\usepackage{multirow}
\usepackage{multicol}
\usepackage{algorithm}
\usepackage{algorithmic}
\usepackage{epstopdf}
\usepackage{tikz}

\newtheorem{theorem}{Theorem}[section]
\newtheorem{lemma}[theorem]{Lemma}
\newtheorem{proposition}[theorem]{Proposition}
\newtheorem{corollary}[theorem]{Corollary}

\theoremstyle{definition}
\newtheorem{definition}[theorem]{Definition}

\newtheorem{remark}[theorem]{Remark}
\numberwithin{equation}{section}

\begin{document}
\setcounter{page}{1}

\vspace*{2.0cm}
\title[Goldberg-Thorp example in light of ill-posedness classification]
{A note on the Goldberg-Thorp example in light of the classification of linear ill-posed problems in Banach spaces}
\author[Bernd Hofmann and Jens Flemming]{Bernd Hofmann$^1$, Jens Flemming$^2$}
\date{February 28, 2026}
\maketitle
\vspace*{-0.6cm}

\begin{center}
{\footnotesize
$^1$Faculty of Mathematics, Chemnitz University of Technology, 09107 Chemnitz, Germany.\\
$^2$Faculty of Informatics/Mathematics, HTWD -- University of Applied Sciences, 01069 Dresden, Germany.
}\end{center}

\medskip

\vskip 4mm {\footnotesize \noindent {\bf Abstract.}
This note considers the strictly singular mapping, denoted by $B$, from $\ell^1$ onto $\ell^2$ of an example by Goldberg and Thorp from 1963 as a typical hybrid-type operator in the context of the classification of ill-posed linear operators in infinite-dimensional Banach spaces. The null-spaces of hybrid-type operators are not complemented and therefore need special attention. More generally, a given well-posedness definition for linear operators requiring both closed range and complemented null-space is motivated by the continuity of occurring pseudo-inverse operators as a stability criterion. With respect to the operator $B$, structure, representation and properties of the operator and its adjoint are summarized in a theorem. Moreover, limitations and opportunities of regularization approaches for the treatment of $B$ are outlined.

\bigskip

\noindent {\bf Keywords.}
Linear operator equations, Banach spaces, Goldberg-Thorp example, hybrid-type operators, well-posedness definition, ill-posedness characterization, regularization.

\medskip

\noindent {\bf 2020 Mathematics Subject Classification.}
47A52, 65J20, 47B01.

}

\renewcommand{\thefootnote}{}
\footnotetext{
E-mail addresses: hofmannb@mathematik.tu-chemnitz.de (B. Hofmann) and jens.flemming@htw-dresden.de (J. Flemming).
}

\bigskip
	
\section{Introduction} \label{sec:introduction}

In the frequently cited work \cite{GoldThorp63} by S. Goldberg and E. O. Thorp, an interesting \emph{strictly singular} and continuous linear operator between sequence spaces is mentioned by their example.
This bounded linear operator, which we will henceforth call $B$,  maps the non-reflexive Banach space $\ell^1$ \emph{onto} the separable Hilbert space $\ell^2$.
The existence of such a \emph{surjective} operator $B: \ell^1 \to \ell^2$ is justified in \cite{Mazur33}. As we can see below, $B$ is not uniquely determined, but always \emph{non-injective},  and in the sequel we will use as $B$ a well-selected representative from the family of bounded linear operators from $\ell^1$ onto $\ell^2$.

The more in-depth characterization of the operator $B$ fits with $A:=B,\;X:=\ell^1$ and $Y:=\ell^2$ into the treatment of linear operator equations
\begin{equation}\label{eq:opeq}
A\, x =y  \quad (x \in X,\; y \in Y)\,,
\end{equation}
where $X$ and $Y$ are infinite-dimensional Banach spaces and $A: X \to Y$ denotes a bounded linear operator mapping between $X$ and $Y$. In particular, the distinction between \emph{well-posed}
and \emph{ill-posed} equations \eqref{eq:opeq}, respectively of the corresponding operators $A$, plays a prominent role for the characterization. M.~Z.~Nashed has introduced in the seminal paper \cite{Nashed86}
an approach to classification and distinguishing type~I and type~II of ill-posedness for such equations. \linebreak As in \cite{Nashed86}, in much of the literature on linear problems in Banach spaces with continuous operators, well-posedness is defined by the occurrence of a closed range $\mathcal{R}(A)$ of the operator $A$, whereas a non-closed range $\mathcal{R}(A)$ indicates ill-posedness. This would mean well-posedness for the specific operator equation
\begin{equation}\label{eq:Beq}
B\, x =y  \quad (x \in \ell^1,\; y \in \ell^2)
\end{equation}
under discussion here with the closed range $\mathcal{R}(B)=\overline{\mathcal{R}(B)}^{\ell^2}=\ell^2$. Indeed, the alleged well-posedness only based on the property of a closed range is motivated by the fact that a closed range implies \emph{stability} in the sense of V.~K.~Ivanov \cite{Ivanov63} also for non-injective bounded linear operators $A$. This means, for any exact right-hand side $y \in \mathcal{R}(A)=\overline{\mathcal{R}(A)}^Y$, that approximations $y_n \in \mathcal{R}(A)$ with
$\lim \limits_{n \to \infty} \|y_n-y\|_Y=0$ imply the convergence of the non-symmetric \emph{quasi-distance} ${\rm qdist}(A^{-1}(y_n),A^{-1}(y)):= \sup \limits_{\tilde x \in A^{-1}(y_n)} \inf \limits _{x \in A^{-1}(y)} \|\tilde x -x\|_X \to 0$ as $n \to \infty$, and we refer for details to
\cite[Prop.~1.12]{Flemmingbuch18}. However, this kind of stability with the rather weak quasi-distance error measure is only one possible aspect for a well-posedness definition.

Another aspect of stability is the selection of ``well-behaving'' concrete approximate solutions $x^\delta=x^\delta(y^\delta) \in X$ to equation \eqref{eq:opeq} with fixed right-hand side $y \in \mathcal{R}(A)$ for any given noisy data $y^\delta \in Y$, where $\|y^\delta-y\|_Y \le \delta$ and $\delta>0$ expresses the noise level. For the sake of simplicity, we restrict here to the case of bounded linear operator $A: X \to Y$, for which the \emph{range} $\mathcal{R}(A)$  \emph{is dense in} $Y$, i.e.~$\overline{\mathcal{R}(A)}^Y=Y$. There we have first the case that $A$ is \emph{surjective} with $\mathcal{R}(A)=Y$, hence all noisy data $y^\delta \in Y$ are range elements. This case applies for the operator $B$ from the Goldberg-Thorp example.
Secondly, we have the case that $\mathcal{R}(A)$ is a proper subset of $Y$ and noisy data $y^\delta \in Y$ need not belong to the range $\mathcal{R}(A)$.
Brief remarks on the remaining case that the closure of the range  $\overline{\mathcal{R}(A)}^Y$ is a \emph{proper subset} of $Y$ can be found in the appendix.

First we consider the case of \emph{surjective} operators $A$. A standard approach for finding approximate solutions $x^\delta$ is the discrepancy norm minimization
\begin{equation} \label{eq:dismin}
 \|Ax-y^\delta\|_Y \to \min, \quad \mbox{subject to} \; x \in X,
\end{equation}
which is in particular well-regarded by finding least-squares solutions in Hilbert spaces $Y$. There the convex discrepancy norm functional is solvable for all $y^\delta \in Y$. But the minimizers $x^\delta \in X$ to \eqref{eq:dismin} are not uniquely determined if $A$ is non-injective.
Then solutions with specific properties  are being sought. Most popular is the approximation based on the \emph{minimum-norm solution} $x_{mn}$ satisfying the property  $\|x_{mn}\|_X = \min \{\|x\|_X:\, x \in X,\, Ax=y\}$. Such minimum-norm solutions exist and are uniquely determined if $X$ is a smooth and uniformly convex Banach space (cf.~\cite[Lemma~3.3]{Schusterbuch12}). Then good approximations to $x_{mn}$ can be calculated by means of \emph{regularized solutions} $x_\alpha^\delta$ with regularization parameters $\alpha>0$. For the general theory of regularized solutions we refer, for example, to the extensive work of A.~N.~Tikhonov and A.~G.~Yagola, see e.g.~\cite{TikLeoYag98} and \cite{YaKo13}. Some prominent variant of regularization consists in finding $x_\alpha^\delta$ by solving the convex optimization problem
\begin{equation} \label{eq:regmin}
 \|Ax-y^\delta\|^p_Y+\alpha \|x\|^q_X \to \min, \quad \mbox{subject to} \; x \in X,
\end{equation}
for exponents $p,q \ge 1$ and under appropriate choice rules of the regularization parameter $\alpha$. Note that the assignment $y^\delta \in Y \mapsto x_{mn} \in X$ is in general a \emph{nonlinear} procedure.
Unfortunately, the Banach space $X=\ell^1$ under focus here is not uniformly convex and minimum-norm solutions $x_{mn}$ do also for surjective $A:\ell^1 \to Y$ not necessarily exist.
However, as a consequence of the Bartle-Graves Theorem (see \cite[Cor.~5J.4, p.~345]{DoRo14}) there is for surjective $A: \ell^1 \to Y$ a \emph{continuous} operator $S:Y \to \ell^1$
in the sense of a (in general) \emph{nonlinear} recovery operator obeying $ASy=y$ for all $y \in Y$, which could deliver stable approximate solutions, but $S$ is not available in explicit form and hence cannot be exploited for practical use.

On the other hand, for any bounded linear operator $A: X \to Y$ there is a subspace $U$ of $X$ as a complement of the null-space $\mathcal{N}(A)$ of $A$ such that
the direct sum  $X=\mathcal{N}(A) \oplus U$ takes place. Since the restriction of $A|_U: U \to Y$ of the operator $A$ is injective, there is a well-defined \emph{pseudo-inverse} $A_U^\dagger: \mathcal{R}(A) \to U$, and we have to distinguish the cases of complemented and uncomplemented null-spaces.
If there is some closed subspace $U$ of $X$ as a complement to $\mathcal{N}(A)$ in $X$, the null-space is called (topologically) \emph{complemented} in $X$. Then, for
surjective operators $A$ that always possess a closed range, the pseudo-inverse $A_U^\dagger: Y \to U$ is a \emph{bounded} linear operator as a consequence of the open mapping theorem. Otherwise, the null-space is called (topologically) \emph{uncomplemented} in $X$, which means that all subspaces $U$ of $X$ in the direct sum are not closed. This is only possible in infinite dimensional spaces $X$ and if the dimension and the codimension of the null-space $\mathcal{N}(A)$ are not finite. In the monograph \cite{Flemmingbuch18} of the second author it was proven with Proposition~1.11 ibid that then the pseudo-inverse $A_U^\dagger: Y \to U$ is always an \emph{unbounded} linear operator. But in both cases and for all $y \in Y=\mathcal{R}(A)$ there is a well-defined element
$x_u=A_U^\dagger y \in U$, which we call \emph{$U$-solution} of \eqref{eq:opeq} to the exact right-hand side $y$. Then also approximations $x^\delta_u=A_U^\dagger y^\delta \in U$ are well-defined for surjective $A$. There we have for complemented null-space $\mathcal{N}(A)$ an error estimate $\|A_U^\dagger y^\delta-A_U^\dagger y\|_X \le C \,\delta$ (with the operator norm $C=\|A_U^\dagger\|_{\mathcal{L}(Y,X)}$ as constant) that characterizes the quality of the approximation of the $U$-solution to the exact right-hand side $y$ by using the pseudo-inverse operator applied to noisy data $y^\delta \in Y$ with noise level $\delta>0$. The error tends to zero as $\delta \to 0$. The situation is completely different for an uncomplemented null-space $\mathcal{N}(A)$. Then due to the unboundedness of the pseudo-inverse operator there is no such constant $C$ and the two elements $A_U^\dagger y^\delta$ and $A_U^\dagger y$ can be any distance apart even if $\delta>0$ is arbitrarily small. This gives enough motivation to require the property of complementedness of the null-space $\mathcal{N}(A)$ of $A$ in $X$ in addition to the closedness of the range $\mathcal{R}(A)$ for the definition of well-posedness of an operator equation \eqref{eq:opeq}. Such a conclusion was already applied in Definition~4.1 of \cite{HofKin25} and slightly amended in Definition~2.6 of \cite{FleHof25}. In this sense, we recall in Section~\ref{sec:def} the latter definition, including also a distinction of type~I and type~II ill-posedness, which differs slightly from the corresponding definitions suggested in \cite{Nashed86} and more recently in \cite{HofKin25} for good reason.

Now we still briefly consider the case of \emph{non-surjective} operators $A$ with range $\mathcal{R}(A)$ dense in $Y$. This makes the operator equation \eqref{eq:opeq} \emph{ill-posed} in the sense of Nashed \cite{Nashed86}. As a special feature of the non-closed range we have here that, for noisy data  $y^\delta \notin \mathcal{R}(A)$, the discrepancy norm minimization systematically fails. This means that the optimization problem \eqref{eq:dismin} has no minimizers, because the norms $\|x_n\|_X$ of minimizing sequences $(x_n)_{n \in \mathbb{N}}$ characterized by $\lim_{n \to \infty}\|Ax_n-y^\delta\|_Y = \inf_{x \in X}\|Ax-y^\delta\|_Y$ tend to infinity as $n \to \infty$. We note that $U$-solutions $x_u=A_u^\dagger y$ for $y \in \mathcal{R}(A)$ cannot be approximated
by $x^\delta_u=A_u^\dagger y^\delta$ because the pseudo-inverse operator $A^\dagger_U: \mathcal{R}(A) \to U$ is not defined for elements $y^\delta \notin \mathcal{R}(A)$, independent of the question whether the null-space $\mathcal{N}(A)$ is complemented or uncomplemented in $X$.

The remaining paper is organized as follows: Section~\ref{sec:def} presents and motivates a well-posedness definition and the distinction between two types of ill-posedness for linear operator equations in infinite-dimensional Banach spaces. Hybrid-type operators are considered there as a preparation to Section~\ref{sec:Goldberg}, where Mazur-type operators are introduced with focus on the operator of the Goldberg-Thorp example, which maps $\ell^1$ onto $\ell^2$ and will be denoted by $B$. The properties of $B$ are summarized in a theorem in Section~\ref{sec:Goldberg}. Limitations and opportunities of regularization approaches for the treatment of $B$ are outlined in Section~\ref{sec:regu}. An appendix with additional remarks completes the paper.

\section{Characterization of well-posedness and ill-posedness with focus on the hybrid case} \label{sec:def}

We deliver now the present version of a definition for well-posedness
 and ill-posedness characterization and classification of operator equations \eqref{eq:opeq} with bounded linear operators $A: X \to Y$ mapping between infinite dimensional Banach spaces $X$ and $Y$. For simplicity, we assume that the closure of the range of $A$ and the Banach space $Y$ coincide. The associated Figure~\ref{fig:update} is intended to illustrate the definition.

\begin{definition}[Well- and ill-posedness characterization and classification] \label{def:new}
Let $A: X \to Y$ be a bounded linear operator mapping between the
infinite-dimensional Banach spaces $X$ and $Y$.

Then the operator equation \eqref{eq:opeq} is called \emph{well-posed} if
\begin{align*} &\text{the range $\mathcal{R}(A)$ of $A$ is a \emph{closed} subset of $Y$ and, moreover,} \\
&\text{the null-space $\mathcal{N}(A)$ is \emph{complemented} in $X$;}
\end{align*}
otherwise the equation (\ref{eq:opeq}) is called \emph{ill-posed}.

\medskip

In the ill-posed case, (\ref{eq:opeq}) is called \emph{ill-posed of type I}   if
\begin{align*}
&\text{the range $\mathcal{R}(A)$ contains an \emph{infinite-dimensional closed subspace} of $Y$;}
\end{align*}
otherwise the ill-posed
equation (\ref{eq:opeq}) is called \emph{ill-posed of type~II}.
\end{definition}

\begin{figure}[ht]
\begin{center}
\includegraphics{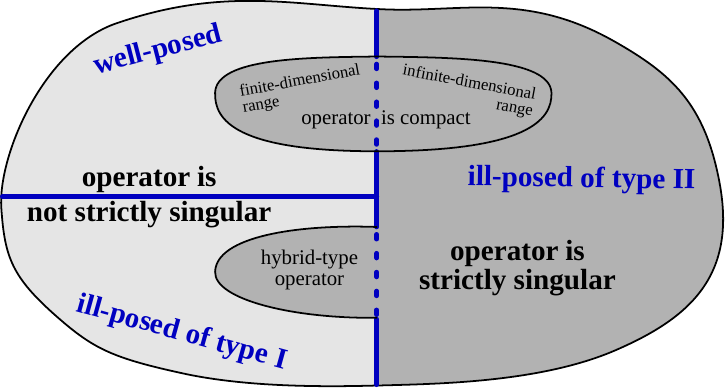}
\caption{Case distinction for bounded linear operators between
infinite-dimensional Banach spaces with dense range} \label{fig:update}
\end{center}
\end{figure}

\pagebreak

As we have motivated in the introduction, well-posedness in the sense of Definition~\ref{def:new} needs a closed range $\mathcal{R}(A)$ as well as a complemented null-space $\mathcal{N}(A)$ of the operator $A$, and we refer for further justification also to Proposition~\ref{pro:appendix} in the appendix.
Although the phenomenon of uncomplementedness was already mentioned in Nashed's publication \cite{Nashed86}, its impact on stability of approximate solutions to the operator equation \eqref{eq:opeq} was not mentioned ibid. Therefore, our new definition differs in this point for good reason from the setting in \cite{Nashed86}. Well-posed
situations can be found in Figure~\ref{fig:update} in the area above the horizontal line left of the vertical line. They include the case of compact operators $A$ with finite-dimensional
range.

In another point, our definition differs also from \cite{Nashed86} as well as from \cite[Def.~4.1]{HofKin25}. We collect in the class of type~I ill-posed problems all ill-posed equations, where the range of $A$ contains an infinite-dimensional closed subspace to be found in Figure~\ref{fig:update} in the area below the horizontal line left of the vertical line. This includes the \emph{hybrid case} along the lines of Definition~4.6 from \cite{HofKin25}, which we will recall in Definition~\ref{def:hybrid}, after we recalled in Definition~\ref{def:ss} the concept of strict singularity.

\begin{definition}[Strictly singular operators] \label{def:ss}
A bounded linear operator $A: X \to Y$ mapping between the Banach spaces $X$ and $Y$  is said to be \emph{strictly singular} if,
given any closed infinite-dimensional subspace $Z$ of $X$, $A$ restricted to $Z$ is
never an isomorphism.
\end{definition}

\begin{definition}[Hybrid-type operators] \label{def:hybrid}
We characterize the operator equation \eqref{eq:opeq} and its corresponding bounded linear operator $A: X \to Y$  as of \emph{hybrid-type} if $A$ is strictly singular and its range $\mathcal{R}(A)$ contains an infinite-dimensional closed subspace of $Y$.
\end{definition}

The following proposition along the lines of Prop.~4.7 from \cite{HofKin25}, but with a slightly amended proof, establishes a connection to the uncomplementedness of the null-space in the hybrid situation and explains the clear area separation between hybrid-type and compact operators in  Figure~\ref{fig:update}.

\begin{proposition} \label{pro:hybridprop}
For an operator equation \eqref{eq:opeq} of hybrid-type, the operator $A: X \to Y$ is not compact, and its null-space $\mathcal{N}(A)$ is always uncomplemented.
\end{proposition}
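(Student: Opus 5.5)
Both assertions follow from the fact that $\mathcal{R}(A)$ contains an infinite-dimensional closed subspace $Z \subseteq Y$. Set $W := A^{-1}(Z)$, a closed subspace of $X$ because $A$ is continuous. The restriction $A|_W : W \to Z$ is a bounded linear surjection between Banach spaces, so by the open mapping theorem there is a constant $c>0$ such that every $z \in Z$ has a preimage $w \in W$ with $Aw = z$ and $\|w\|_X \le c\,\|z\|_Y$.

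\emph{Non-compactness.} Take the closed unit ball $B_Z$ of $Z$. The set $M := \{w \in W : Aw \in B_Z,\ \|w\|_X \le c\}$ is bounded in $X$, and by the open mapping estimate $A(M) \supseteq B_Z$. If $A$ were compact, $A(M)$ would be relatively compact, and hence so would $B_Z$. This contradicts Riesz's lemma, since $\dim Z = \infty$. So $A$ is not compact. Strict singularity is not needed for this part.

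\emph{Uncomplementedness.} Suppose, to the contrary, that $X = \mathcal{N}(A) \oplus U$ with $U$ closed. Then $A|_U : U \to \mathcal{R}(A)$ is injective. Set $V := U \cap A^{-1}(Z) = (A|_U)^{-1}(Z)$. This is a closed subspace of $X$, and $A|_V : V \to Z$ is a continuous bijection: it is surjective because $Z \subseteq \mathcal{R}(A) = A(U)$, using $X = \mathcal{N}(A) \oplus U$. Since $V$ and $Z$ are Banach spaces, the bounded inverse theorem makes $A|_V$ an isomorphism onto $Z$. Moreover $V$ is infinite-dimensional, because $A|_V$ is a bijection onto the infinite-dimensional space $Z$. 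So $A$ restricted to the closed infinite-dimensional subspace $V$ is an isomorphism, which contradicts Definition~\ref{def:ss}. Hence $\mathcal{N}(A)$ is uncomplemented.

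\emph{Main obstacle.} The delicate point is the surjectivity of $A|_V$ onto $Z$, which is exactly where complementedness enters. Given $z \in Z$, pick $x$ with $Ax = z$ and decompose $x = n + u$ with $n \in \mathcal{N}(A)$ and $u \in U$. Then $Au = z$, so $u \in V$. Closedness of $U$ is what makes $V$ closed, and that is needed to invoke the bounded inverse theorem. One should also note that this argument shows more generally that $\mathcal{N}(A)$ has no closed complement at all, which matches the paper's notion of uncomplemented. A side remark on non-compactness: it also follows from the second part, because compact operators with infinite-dimensional range are never surjective onto an infinite-dimensional closed subspace.
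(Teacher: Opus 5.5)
Your proof is correct and follows the same approach as the paper. Non-compactness comes from Riesz's lemma plus the open mapping theorem. Uncomplementedness comes from assuming a closed complement $U$ and exhibiting a closed infinite-dimensional subspace on which $A$ is an isomorphism; your $V=U\cap A^{-1}(Z)$ is exactly the paper's $A_U^\dagger[M]$. Your version is more careful than the paper's. You apply the open mapping theorem to the surjection $A|_{A^{-1}(Z)}\colon A^{-1}(Z)\to Z$, and you get the bounded inverse on $V$ from the bounded inverse theorem. The paper instead speaks of a ``continuous pseudo-inverse'' $A_U^\dagger$, which need not be bounded on all of $\mathcal{R}(A)$ when the range is not closed.
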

\begin{proof}
By Definition~\ref{def:hybrid}, $\mathcal{R}(A)$ contains a closed infinite-dimensional subspace, say $M$. Since $M$ is infinite-dimensional, then by the Riesz lemma there is a bounded sequence $(y_n) \subset M$ which has no convergent subsequence. For the non-injective operator $A$, the
open mapping theorem in the formulation of \cite[Theorem~3, item 4]{Tao09b} gives a sequence $(x_n)_{n\in\mathbb{N}}$ with $A x_n = y_n$ and $\|x_n\|_X \leq C \|y_n\|_Y$ for all $n\in\mathbb{N}$. If $A$ was compact, then $(y_n)_{n\in\mathbb{N}}$ would have a convergent subsequence, which is a contradiction.

If the null-space would be complemented for the non-compact operator $A$, then $X=\mathcal{N}(A) \oplus U$ is connected with an infinite-dimensional closed subspace $U$ of $X$.
Now, let again $M \subseteq \mathcal{R}(A)$ denote an infinite-dimensional closed subspace of $Y$ and, due to the continuity and injectivity of $A$ on $U$, the image set $A_U^{\dagger}[M]$
of the continuous pseudo-inverse with respect to $U$ applied to $M$ is an infinite-dimensional closed subspace of $X$, on which the continuous operator $A$ is even continuously invertible.
This, however, contradicts the strict singularity of $A$.
\end{proof}

From the properties of uncomplemented null-spaces in infinite-dimensional Banach spaces we immediately find the assertion of the following corollary.

\begin{corollary} \label{cor:hybrid}
For an operator equation \eqref{eq:opeq} of hybrid-type, the operator $A$ is non-injective and both the dimension and the codimension of its null-space $\mathcal{N}(A)$ are not finite.
\end{corollary}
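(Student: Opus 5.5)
The plan is to derive everything from Proposition~\ref{pro:hybridprop}, which says that for a hybrid-type operator the null-space $\mathcal{N}(A)$ is uncomplemented in $X$. Each of the three claims will follow by contraposition from a standard fact: a closed subspace that is finite-dimensional, or of finite codimension, is always complemented in a Banach space.

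\emph{Non-injectivity.} If $A$ were injective, then $\mathcal{N}(A)=\{0\}$, and $X=\{0\}\oplus X$ is a topological direct sum with the closed complement $U=X$. This contradicts Proposition~\ref{pro:hybridprop}, so $A$ must be non-injective.

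\emph{Infinite dimension of the null-space.} Suppose $\dim \mathcal{N}(A)=n<\infty$. Choose a basis $e_1,\dots,e_n$ of $\mathcal{N}(A)$ with coordinate functionals $f_1,\dots,f_n$ defined on $\mathcal{N}(A)$. Extend each $f_i$ to a functional on $X$ by the Hahn--Banach theorem. Then $P x:=\sum_i f_i(x)e_i$ is a bounded projection onto $\mathcal{N}(A)$. Its kernel $U=\mathcal{N}(P)$ is a closed complement, which again contradicts the proposition.

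\emph{Infinite codimension of the null-space.} Suppose $\mathcal{N}(A)$, which is closed because $A$ is bounded, has finite codimension $m$. Take any algebraic complement $U$. It is spanned by finitely many vectors $u_1,\dots,u_m$, so it is a finite-dimensional subspace and hence closed. The sum $\mathcal{N}(A)+U$ is then topological: the quotient map $X\to X/\mathcal{N}(A)$ restricts to a linear bijection $U\to X/\mathcal{N}(A)$ between finite-dimensional spaces, and this bijection has a bounded inverse. Composing that inverse with the quotient map gives a bounded projection onto $U$ along $\mathcal{N}(A)$. This makes $\mathcal{N}(A)$ complemented, a contradiction.

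The only step needing any care is the finite-codimension case. There one has to verify that the algebraic direct sum is actually topological, that is, that the projection built above is bounded. This follows from the boundedness of the quotient map together with the automatic continuity of linear maps on finite-dimensional normed spaces. Everything else is immediate from Proposition~\ref{pro:hybridprop}.
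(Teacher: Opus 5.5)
Your proposal is correct and takes the same route as the paper. The paper obtains the corollary directly from Proposition~\ref{pro:hybridprop} (the null-space is uncomplemented), together with the standard facts that closed subspaces of finite dimension or finite codimension are complemented; your proposal just writes out the proofs of those facts.
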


The crucial point of hybrid-type operators $A$ is its strict singularity (cf.~\cite[p.~284]{Kato58}), which means that for any closed infinite-dimensional subspace $M$ of $X$,
the restriction $A|_M$ of $A$ to $M$ is not an isomorphism. Note that in all ill-posed cases, apart from hybrid-type operators, strictly singular operators $A$ (areas with dark background in Figure~\ref{fig:update}) are ill-posed of type~II (occurring in the area right of the vertical line in Figure~\ref{fig:update}). They include the class
of compact operators with infinite-dimensional range.

\section{The operator of the Goldberg-Thorp example} \label{sec:Goldberg}

For the operator $B$ mapping from $\ell^1$ onto $\ell^2$ of the Goldberg-Thorp example in \cite{GoldThorp63} there is no specific structure prescribed. However, such operator $B$ can
be represented based on operators defined below in Definition~\ref{def:mazur}, which we call \emph{Mazur-type} operators, with properties outlined in Proposition~\ref{pro:mazur} below.

\begin{definition}[Mazur-type operators]\label{def:mazur}
A bounded linear operator $A_1^Y: \ell^1 \to Y$ mapping into a separable infinite-dimensional Banach space $Y$ is called \emph{Mazur-type} if there is a sequence $(\zeta^{(k)})_{k\in\mathbb{N}}$ in $Y$ which is dense in the unit sphere in $Y$, with $\zeta^{(k)}\neq\zeta^{(l)}$ for $k\neq l$, and satisfies, for $x=(x_1,x_2,...) \in \ell^1$, the representation
\begin{equation}\label{eq:mazurstructure}
[A_1 ^Y]\,x=\sum_{k=1}^\infty x_k\,\zeta^{(k)}.
\end{equation}
\end{definition}

\smallskip

Note that, for any $x \in \ell^1$, the weighted sum of $\zeta^{(k)}$-functions in \eqref{eq:mazurstructure} always converges and provides us with an element of $Y$.

\begin{proposition} \label{pro:mazur}
For a separable infinite-dimensional Banach space $Y$, all Mazur-type operators $A_1^Y: \ell^1 \to Y$ in the sense of Definition~\ref{def:mazur} are mapping $\ell^1$ \emph{onto} $Y$.
If $Y$ is reflexive, then all Mazur-type operators $A_1^Y$ are strictly singular, hence of hybrid-type in the sense of Definition~\ref{def:hybrid}, and the associated operator equation \eqref{eq:opeq} with $A:=A_1^Y$ is ill-posed of type~I in the sense of
Definition~\ref{def:new}.
\end{proposition}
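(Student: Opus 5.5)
Surjectivity comes first, via a successive-approximation (Banach) argument. Write $A:=A_1^Y$. Since $\|\zeta^{(k)}\|_Y=1$, the map $A$ is bounded with $\|A\|\le 1$. Take $y\in Y$ with $y\neq 0$ and put $y_0:=y$. Density of $(\zeta^{(k)})$ in the unit sphere lets us choose an index $k_1$ with
\[
\bigl\|y_0/\|y_0\| - \zeta^{(k_1)}\bigr\| \le \tfrac12 .
\]
With $y_1:=y_0-\|y_0\|\,\zeta^{(k_1)}$ this gives $\|y_1\|\le \tfrac12\|y_0\|$. Repeating the step produces indices $k_j$ and remainders satisfying $\|y_j\|\le 2^{-j}\|y\|$. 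If an index repeats we just add the coefficients, so distinctness of the $\zeta^{(k)}$ is not needed here. Now set $x:=\sum_j \|y_{j-1}\|\, e^{(k_j)}$. This lies in $\ell^1$ with $\|x\|_1\le 2\|y\|$, and telescoping gives $Ax=y$. Hence $A$ is onto, even with a norm bound, and so $\mathcal{R}(A)=Y$ is closed.

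Next, assume $Y$ is reflexive; we show $A$ is strictly singular. Suppose instead that $Z\subseteq\ell^1$ is a closed infinite-dimensional subspace on which $A|_Z$ is an isomorphism onto its (closed) image. Then $Z$ is isomorphic to a closed subspace of the reflexive space $Y$, so $Z$ is reflexive. But every infinite-dimensional closed subspace of $\ell^1$ contains a copy of $\ell^1$; this is a classical result of Pełczyński, resting on the Schur property and a gliding hump / Bessaga–Pełczyński selection. Hence $Z$ is not reflexive, a contradiction.

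An alternative route avoids citing Pełczyński. Bounded sets in $Z$ are carried to bounded sets in $Y$, and these are weakly relatively compact. Since $A|_Z^{-1}$ is weak-to-weak continuous, bounded sequences in $Z$ have weakly convergent subsequences, which converge in norm by the Schur property. So the unit ball of $Z$ is compact, and $\dim Z<\infty$, again a contradiction. I would use this second argument as the proof: it needs only the Schur property of $\ell^1$ and Eberlein–Šmulian in $Y$. I expect this step to be the main point requiring care, mainly in checking that $A|_Z^{-1}$ is bounded and linear, hence weakly continuous, on the closed subspace $A[Z]$.

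Finally, the classification. $\mathcal{R}(A)=Y$ is itself an infinite-dimensional closed subspace, so $A$ is of hybrid-type in the sense of Definition~\ref{def:hybrid}. Proposition~\ref{pro:hybridprop} then shows that $\mathcal{N}(A)$ is uncomplemented. Consequently \eqref{eq:opeq} is not well-posed in the sense of Definition~\ref{def:new}, despite the closed range. Since the range contains the infinite-dimensional closed subspace $Y$, the equation is ill-posed of type~I.
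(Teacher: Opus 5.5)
Your proof is correct, and it has the same skeleton as the paper's: surjectivity, then strict singularity from the reflexivity of $Y$ combined with the absence of infinite-dimensional reflexive subspaces in $\ell^1$, then the classification. Your first route for strict singularity is literally the paper's argument. The difference is that the paper argues purely by citation: surjectivity comes from Banach and Mazur (via Theorem~2.3.1 of Albiac--Kalton), and strict singularity from item~(b) of the Goldberg--Thorp theorem. You prove both ingredients yourself.

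Your successive-approximation step is the standard argument behind the Banach--Mazur result. It also yields a preimage with $\|x\|_{\ell^1}\le 2\|y\|_Y$. With tolerance $\varepsilon$ in place of $\frac12$ you even get $\|x\|_{\ell^1}\le\|y\|_Y/(1-\varepsilon)$, so together with $\|A_1^Y\|\le 1$ the operator is in fact a quotient map.

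Your Schur-property argument is an inline proof that reflexive subspaces of $\ell^1$ are finite-dimensional. It needs only the Schur property, weak sequential compactness of bounded sets in reflexive spaces, and Riesz's lemma, instead of Pe{\l}czy\'nski's structure theorem. It never uses the representation \eqref{eq:mazurstructure}, so it shows that every bounded operator from $\ell^1$ into a reflexive space is strictly singular. That is exactly what the paper needs again in Theorem~\ref{thm:Bprop}.

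You also spell out the classification, which the paper's proof leaves implicit: hybrid type, an uncomplemented null-space by Proposition~\ref{pro:hybridprop}, and hence ill-posedness of type~I despite the closed range.
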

\begin{proof}
The existence of bounded linear operators mapping from $\ell^1$ onto any separable infinite-dimensional Banach space $Y$ by using the Mazur-type structure \eqref{eq:mazurstructure} has already been proven by Banach and Mazur, and we refer to  \cite[page~111, item (e)]{Mazur33}. Such proof was presented in a somewhat more accessible form in the context of Theorem~2.3.1 of \cite{AlbKal06}.
The strict singularity of the operator $A_1^Y$ for reflexive Banach spaces $Y$ is a consequence of \cite[Theorem,~item~(b),~p.~334]{GoldThorp63}, because $\ell^1$ does not contain any infinite-dimensional reflexive subspace.
\end{proof}

\begin{remark} \label{rem:l1identity}
Proposition~\ref{pro:mazur} ensures that all Mazur-type operators with $Y=\ell^2$ deliver surjective and strictly singular operators $B=A_1^{\ell^2}: \ell^1 \to \ell^2$ along the lines of
the Goldberg-Thorp example. Since, for all separable Banach spaces $Y$, there are infinitely many sequences $(\zeta^{(k)})_{k\in\mathbb{N}} \subset Y$ with the required properties for Mazur-type operators $A_1^Y$ by formula \eqref{eq:mazurstructure} in Definition~\ref{def:mazur}, infinitely many different versions of the operator $A_1^Y$ exist, and of course also for the operator $B$ from the Goldberg-Thorp example with $Y:=\ell^2$.
\end{remark}

\pagebreak

\begin{theorem} \label{thm:Bprop}
Any surjective bounded linear operator $B: \ell^1 \to \ell^2$ is of hybrid-type, and hence the operator equation \eqref{eq:Beq} is ill-posed of type~I in the sense of Definition~\ref{def:new}.
The operator $B$ has a null-space $\mathcal{N}(B)$, which is uncomplemented in $\ell^1$. Both the dimension and the codimension of the null-space $\mathcal{N}(B)$ are not finite. Consequently,  $B$ is non-injective. The adjoint operator $B^*: \ell^2 \to \ell^\infty$ is injective and hence not strictly singular with an infinite-dimensional closed range $\mathcal{R}(B^*)=\mathcal{N}(B)^\perp$ (annihilator of $\mathcal{N}(B)$). Therefore, the adjoint operator equation
\begin{equation}\label{eq:Beq*}
B^*\,\eta  =z  \quad (\eta \in \ell^2,\; z \in \ell^\infty)
\end{equation}
is well-posed in the sense of Definition~\ref{def:new}.

If $B$ is of Mazur-type attaining the form
\begin{equation}\label{eq:Bstructure}
B\,x:=\sum_{k=1}^\infty x_k\,\zeta^{(k)} \in \ell^2 \qquad \mbox{for} \quad x=(x_1,x_2,...) \in \ell^1,
\end{equation}
the adjoint operator $B^*$ can be explicitly written as
\begin{equation}\label{eq:B*}
B^*\,\eta= (\langle\eta,\zeta^{(1)}\rangle_{\ell^2},\langle\eta,\zeta^{(2)}\rangle_{\ell^2},\ldots)   \in \ell^\infty.
\end{equation}
\end{theorem}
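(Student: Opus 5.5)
The plan is to derive everything from two facts: strict singularity of $B$, and surjectivity of $B$ via the closed range theorem. First, $B$ is strictly singular. Suppose $B|_Z$ were an isomorphism for some closed infinite-dimensional subspace $Z\subseteq \ell^1$. Then $Z$ would be isomorphic to the closed subspace $B[Z]$ of $\ell^2$, hence to a Hilbert space, and in particular reflexive. But $\ell^1$ contains no infinite-dimensional reflexive subspace, since every such subspace contains a copy of $\ell^1$ by Pitt/Pe{\l}czy\'nski. This is the same fact used in the proof of Proposition~\ref{pro:mazur} via \cite{GoldThorp63}, so the argument applies to any bounded $B:\ell^1\to\ell^2$, not only to Mazur-type ones. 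Since $\mathcal{R}(B)=\ell^2$ is itself an infinite-dimensional closed subspace of $\ell^2$, $B$ is of hybrid-type by Definition~\ref{def:hybrid}.

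Proposition~\ref{pro:hybridprop} then gives that $\mathcal{N}(B)$ is uncomplemented, which means $B$ is not well-posed. Because the range contains an infinite-dimensional closed subspace, Definition~\ref{def:new} classifies the equation \eqref{eq:Beq} as ill-posed of type~I. Corollary~\ref{cor:hybrid} yields infinite dimension and codimension of $\mathcal{N}(B)$, and hence non-injectivity.

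For the adjoint, I will use the closed range theorem. Since $\mathcal{R}(B)=\ell^2$ is closed, $\mathcal{R}(B^*)$ is weak$^*$-closed and equals $\mathcal{N}(B)^\perp$. Moreover, $\mathcal{N}(B^*)=\mathcal{R}(B)^\perp=\{0\}$, so $B^*$ is injective. Surjectivity of $B$ also gives $\|B^*\eta\|_{\ell^\infty}\ge c\|\eta\|_{\ell^2}$ (the open mapping theorem together with the duality estimate). Hence $B^*$ is an isomorphism from $\ell^2$ onto its closed range, which is infinite-dimensional. Taking $Z=\ell^2$ in Definition~\ref{def:ss} shows that $B^*$ is not strictly singular. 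For well-posedness of \eqref{eq:Beq*}, the range is closed and the null-space $\{0\}$ is trivially complemented, with complement all of $\ell^2$. The only caveat is that Definition~\ref{def:new} was framed under the simplifying density assumption, which fails here since $\mathcal{R}(B^*)\neq\ell^\infty$. The definition itself, however, only requires closed range and complemented null-space, and I would check both conditions literally.

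Finally, for the Mazur-type representation \eqref{eq:Bstructure}, I will compute the adjoint by pairing. For $\eta\in\ell^2$ and $x\in\ell^1$,
\[
\langle \eta,Bx\rangle_{\ell^2}=\sum_k x_k\langle\eta,\zeta^{(k)}\rangle_{\ell^2}.
\]
Interchanging sum and inner product is justified by absolute convergence, since $\|\zeta^{(k)}\|=1$ and $x\in\ell^1$. The sequence $(\langle\eta,\zeta^{(k)}\rangle)_k$ is bounded by $\|\eta\|$, so it lies in $\ell^\infty=(\ell^1)^*$ and represents $B^*\eta$, which gives \eqref{eq:B*}.

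The main obstacle is the strict singularity of an arbitrary, non-Mazur surjection. It rests on the structural fact that $\ell^1$ has no infinite-dimensional reflexive (in particular, no Hilbertian) subspace, which I would cite rather than prove. The remaining steps are routine duality arguments.
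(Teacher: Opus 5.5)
Your proof is correct and follows the paper's route. Strict singularity comes from $\ell^1$ having no infinite-dimensional reflexive subspace, which is the content of the Goldberg--Thorp criterion the paper cites. Then come Proposition~\ref{pro:hybridprop} and Corollary~\ref{cor:hybrid}, the closed range theorem for $B^*$, and the pairing computation for \eqref{eq:B*}.

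The differences are minor. You derive injectivity and the lower bound $\|B^*\eta\|_{\ell^\infty}\ge c\|\eta\|_{\ell^2}$ directly from $\mathcal{N}(B^*)=\mathcal{R}(B)^\perp$ and the open mapping theorem, where the paper cites Megginson. This makes explicit why $B^*$ is not strictly singular, which injectivity alone would not give. The right citation for the $\ell^1$ fact is Pe{\l}czy\'nski's theorem or the Schur property, not Pitt's theorem.
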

\begin{proof} Since the infinite-dimensional space $\ell^2$ is reflexive, the strict singularity of the operator $B$ from $\ell^1$ onto $\ell^2$ is again a consequence of \cite[Theorem,~item~(b),~p.~334]{GoldThorp63}. Then $B$ is of hybrid-type and the equation \eqref{eq:Beq} ill-posed of type~I. The operator $B$ possesses (due to Proposition~\ref{pro:hybridprop}) a null-space $\mathcal{N}(B)$, which is uncomplemented in $\ell^1$. As a consequence of Corollary~\ref{cor:hybrid}, both the dimension and the codimension of the null-space $\mathcal{N}(B)$ are not finite, and thus $B$ is non-injective.
The injectivity of $B^*$ is an immediate consequence of \cite[Theorem~3.1.22(a)]{Meg98}, which applied to our surjective operator $B$ claims that $B^*$ is an isomorphism from $\ell^2$ onto
a subspace of $\ell^\infty$. The closedness of the range $\mathcal{R}(B^*)$ of $B^*$ and the formula $\mathcal{R}(B^*)=\mathcal{N}(B)^\perp$  are consequences of the closed range theorem (see, e.g., \cite[Theorem of Chapt.VII.5]{Yosida80}). This closedness of the range of $B^*$ together with the injectivity of $B^*$ ensure that the equation \eqref{eq:Beq*} is well-posed in the sense of Definition~\ref{def:new}.
For Mazur-type operators $B$ possessing the structure \eqref{eq:Bstructure}, we have as an immediate consequence of the definition of the adjoint operator that
$$\langle \eta,Bx \rangle_{\ell^2}=\sum_{k=1}^\infty x_k\,\langle \eta,\zeta^{(k)}\rangle_{\ell^2} \quad \mbox{for all} \quad \eta \in \ell^2 \;\;\mbox{and} \;\; x \in \ell^1. $$
This yields immediately the representation \eqref{eq:B*} of $B^*$.
Note that for Mazur-type operators $B$ of the form \eqref{eq:Bstructure} the injectivity of $B^*$ can be proven straightforward when we assume that $B^*\eta_1=B^*\eta_2 \in \ell^\infty$ and $\eta:=\eta_1-\eta_2$ for $\eta,\eta_1,\eta_2 \in \ell^2$. Then we have $(\langle\eta,\zeta^{(1)}\rangle_{\ell^2},\langle\eta,\zeta^{(2)}\rangle_{\ell^2},\ldots)=0$. Since the sequence $(\zeta^{(k)})_{k\in\mathbb{N}}$ is a dense subset of the unit sphere in $\ell^2$,
for an arbitrary element $\zeta \in \ell^2$ with$\|\zeta\|_{\ell^2}=1$, we find that $\langle\eta,\zeta\rangle_{\ell^2}=0$ since $\langle\eta,\zeta^{(k)}\rangle_{\ell^2}=0$ for all $k \in \mathbb{N}$. This, however, yields $\eta=0 \in \ell^2$ and thus the injectivity of $B^*$.
\end{proof}

\begin{remark} \label{rem:nonsurjectivehybrid}
As we have seen by Theorem~\ref{thm:Bprop}, the \emph{surjective} operator $B: \ell^1 \to \ell^2$ from the Goldberg-Thorp example is \emph{of hybrid-type} in the sense of Definition~\ref{def:hybrid}, which means that the operator is \emph{strictly singular} and its range contains an \emph{infinite dimensional closed subspace}. In this remark we note that also \emph{non-surjective} operators may be of hybrid-type. For example, we present the operator $A:\ell^1\times\ell^1\to\ell^2\times\ell^2$, where we set $A:=(B,C)$ with $B$ from the Goldberg-Thorp example and
$C:=\mathcal{E}_1^2$ is the injective embedding operator from $\ell^1$ into $\ell^2$ with range $\mathcal{R}(C)$ dense in $\ell^2$. Then $B$ and $C$ and consequently $A$ are strictly singular and the null-space $\mathcal{N}(A)=\mathcal{N}(B) \times \{0\}$ is uncomplemented. Moreover, the non-closed range $\mathcal{R}(A)=\ell^2 \times \mathcal{R}(C)$ is dense in $\ell^2 \times \ell^2$ and contains the closed infinite-dimensional subspace
$\ell^2\times\{0\}$ of $\ell^2 \times \ell^2$. Consequently, the operator $A$ of this example is of hybrid-type.
\end{remark}

\section{Limitations and chances of regularization approaches for Goldberg-Thorp} \label{sec:regu}

Even if the sequence $(\zeta^{(k)})_{k\in\mathbb{N}}$ forming a countable dense subset of the unit sphere in $\ell^2$ would be known,
an effective regularization strategy for the stable approximate solution to equation \eqref{eq:Beq} is difficult.
In \cite[Theorem~3.3]{FleHof25} it was shown that the associated Tikhonov-type regularization based on the optimization problem
\begin{equation} \label{eq:Bregmin}
 \|Bx-y^\delta\|^2_{\ell^2}+\alpha \|x\|_{\ell^1} \to \min, \quad \mbox{subject to} \; x \in \ell^1,
\end{equation}
as a variant of \eqref{eq:regmin} fails, because regularized solutions $x_\alpha^\delta \in \ell^1$ as minimizers to \eqref{eq:Bregmin} do only exists for noisy data $y^\delta$
from a dense subset of $\ell^2$. An analogous result holds for the existence of minimum-norm solutions $x_{mn}$.

\smallskip

\begin{proposition}
If $y$ is not a multiple of $\zeta^{(k)}$ for some $k$, then there is no minimum-norm solution $x_{mn}$ satisfying
\begin{equation}\label{eq:xmn}
\|x_{mn}\|_{\ell^1} = \min_{\{x\in X:Bx=y\}}\|x\|_{\ell^1}.
\end{equation}
\end{proposition}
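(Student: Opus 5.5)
The approach is to compute the infimum of $\|x\|_{\ell^1}$ over $\{x: Bx=y\}$ explicitly and show it equals $\|y\|_{\ell^2}$. We then show that this value can be attained only when $y$ is a multiple of one $\zeta^{(k)}$. Throughout, $B$ is the Mazur-type operator \eqref{eq:Bstructure}, with $(\zeta^{(k)})$ dense in the unit sphere of $\ell^2$ and pairwise distinct. We may assume $y\neq 0$, since $y=0$ is trivially a multiple of every $\zeta^{(k)}$.

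\emph{Lower bound.} Since $\|\zeta^{(k)}\|_{\ell^2}=1$, the triangle inequality gives $\|y\|_{\ell^2}=\|\sum_k x_k\zeta^{(k)}\|_{\ell^2}\le \sum_k|x_k|=\|x\|_{\ell^1}$ for every $x$ with $Bx=y$. Hence the infimum in \eqref{eq:xmn} is at least $\|y\|_{\ell^2}$.

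\emph{Upper bound.} We show the infimum equals $\|y\|_{\ell^2}$ by a greedy construction. Set $r_0:=y$. Given a residual $r_{j}\neq 0$, choose an index $k_{j+1}$, different from the previously chosen ones, such that $\|\zeta^{(k_{j+1})}-r_j/\|r_j\|\|_{\ell^2}\le \varepsilon_j$. This is possible because every point of the sphere is an accumulation point of the dense sequence, since the $\zeta^{(k)}$ are pairwise distinct and the sphere has no isolated points. Then put $r_{j+1}:=r_j-\|r_j\|\zeta^{(k_{j+1})}$, which satisfies $\|r_{j+1}\|\le\varepsilon_j\|r_j\|$. Define $x$ by $x_{k_{j}}:=\|r_{j-1}\|$ and zero elsewhere. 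We choose $\varepsilon_0=\varepsilon$ and $\varepsilon_j=1/2$ for $j\ge1$. Then $\|r_j\|\le \varepsilon 2^{-(j-1)}\|y\|$ for $j\ge 1$, so $Bx=y$ and
\[
\|x\|_{\ell^1}\le \|y\|(1+2\varepsilon).
\]
Therefore the infimum equals $\|y\|_{\ell^2}$.

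\emph{Non-attainment.} Suppose $x$ attains the minimum, so $\|\sum_k x_k\zeta^{(k)}\|=\sum_k|x_k|\,\|\zeta^{(k)}\|$. This is the main step. Pick any two indices $k\neq l$ with $x_k,x_l\neq 0$. Equality in the triangle inequality for the infinite sum forces equality for the finite subsum consisting of the terms $k$ and $l$ together with the remainder. The cleanest route is the inner-product form: equality means
\[
\langle y,\sum_k x_k\zeta^{(k)}\rangle=\|y\|\sum_k|x_k|,
\]
that is, $\sum_k |x_k|\bigl(\|y\|-\langle y/\|y\|\cdot \mathrm{sgn}(x_k),\zeta^{(k)}\rangle\|y\|\bigr)=0$ with nonnegative summands. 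Hence $\mathrm{sgn}(x_k)\zeta^{(k)}=y/\|y\|$ for all $k$ with $x_k\neq0$, by the equality case of Cauchy--Schwarz for unit vectors in the Hilbert space $\ell^2$. Because the $\zeta^{(k)}$ are pairwise distinct, at most two indices can satisfy this, namely one with $\zeta^{(k)}=y/\|y\|$ and one with $\zeta^{(l)}=-y/\|y\|$. Either way $y$ is a multiple of some $\zeta^{(k)}$, which contradicts the hypothesis. Strict convexity of $\ell^2$ is the key ingredient, and the only care needed is justifying the termwise nonnegativity argument.
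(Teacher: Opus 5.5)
Your proof is correct, and it takes a genuinely different route from the paper.

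\textbf{The paper's route.} It argues on the dual side. It writes minimality as $\mathcal{N}(B)^\perp\cap\partial\|\cdot\|_{\ell^1}(x_{mn})\neq\emptyset$. It then uses $\mathcal{N}(B)^\perp=\mathcal{R}(B^*)$ from Theorem~\ref{thm:Bprop} and the explicit adjoint to get a certificate $\eta\in\ell^2$ with $\langle\eta,\zeta^{(k)}\rangle_{\ell^2}\in[-1,1]$ for all $k$, and $\langle\eta,\zeta^{(k)}\rangle_{\ell^2}=\mathrm{sgn}([x_{mn}]_k)$ on the support. Finally, it approximates the normalization of $s_m\zeta^{(m)}+s_n\zeta^{(n)}$ by elements of the dense sequence. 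This forces some $\langle\eta,\zeta^{(k_l)}\rangle_{\ell^2}>1$ whenever the support contains two indices; the antipodal case $\zeta^{(n)}=-\zeta^{(m)}$ is handled separately.

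\textbf{Your route.} You stay on the primal side. You compute the optimal value exactly as $\|y\|_{\ell^2}$. Density enters only in your greedy upper bound, whereas in the paper it enters through the bound on $\eta$. The termwise equality case of Cauchy--Schwarz then pins down $\mathrm{sgn}(x_k)\zeta^{(k)}=y/\|y\|_{\ell^2}$ on the support.

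\textbf{What each buys.} Your argument is more elementary: it needs no subdifferential sum rule, no closed range theorem and no adjoint. It also yields more:
\begin{itemize}
\item It shows in passing that $B$ is a quotient map, i.e.\ $\inf\{\|x\|_{\ell^1}:Bx=y\}=\|y\|_{\ell^2}$.
\item It gives the converse for free: for $y=c\,\zeta^{(k)}$ the vector $c\,e^{(k)}$ is a minimizer.
\item It describes all possible minimizers, so the antipodal case needs no separate treatment.
\item With the equality case of the triangle inequality in place of Cauchy--Schwarz, it extends to Mazur-type operators into any strictly convex separable $Y$.
\end{itemize}
The paper's route fits its theme of exploiting $\mathcal{R}(B^*)=\mathcal{N}(B)^\perp$, and it does not require knowing the optimal value in closed form.

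\textbf{Two cosmetic points.} Your sentence about picking two indices $k\neq l$ is never used. You also do not need the count of at most two indices: a single index in the support, which is nonempty since $y\neq 0$, already contradicts the hypothesis.
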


\begin{proof}
The minimization problem \eqref{eq:xmn} is equivalent to
\begin{equation*}
\mathcal{N}(B)^\perp\cap\partial\|\cdot\|_{\ell^1}(x_{mn})\neq\emptyset
\end{equation*}
with
\begin{equation*}
\xi\in\partial\|\cdot\|_{\ell^1}(x)\quad\Leftrightarrow\quad\xi_k\begin{cases}
=-1,&\text{if }x_k<0,\\
\in[-1,1],&\text{if }x_k=0,\\
=1,&\text{if }x_k>0
\end{cases}
\;\text{for }k\in\mathbb{N}.
\end{equation*}
From Theorem~\ref{thm:Bprop} we know $\mathcal{N}(B)^\perp=\mathcal{R}(B^\ast)$. Thus, if there exists a minimizer, then there is some $\eta\in\ell^2$ such that $B^\ast\eta\in\partial\|\cdot\|_{\ell^1}(x_{mn})$.
\par
Assume that $x_{mn}$ has at least two non-zero components $[x_{mn}]_m\neq 0$ and $[x_{mn}]_n\neq 0$. Denote the signs of both components by $s_m\in\{-1,1\}$ and $s_n\in\{-1,1\}$, respectively. Then $[B^\ast\eta]_m=s_m$ and $[B^\ast\eta]_n=s_n$ or, equivalently, $\langle\eta,\zeta^{(m)}_{\ell^2}\rangle=s_m$ and $\langle\eta,\zeta^{(n)}\rangle_{\ell^2}=s_n$. Now take a subsequence $(\zeta^{(k_l)})_{l\in\mathbb{N}}$ of $(\zeta^{(k)})_{k\in\mathbb{N}}$ converging to
\begin{equation}
\tilde{\eta}:=\frac{s_m\,\zeta^{(m)}+s_n\,\zeta^{(n)}}{\|s_m\,\zeta^{(m)}+s_n\,\zeta^{(n)}\|_{\ell^2}}.
\end{equation}
Then $\langle\eta,\zeta^{(k_l)}\rangle_{\ell^2}\to\langle\eta,\tilde{\eta}\rangle_{\ell^2}$. From
\begin{equation}
\langle\eta,\tilde{\eta}\rangle_{\ell^2}=\frac{2}{\|s_m\,\zeta^{(m)}+s_n\,\zeta^{(n)}\|_{\ell^2}}
\end{equation}
we see $\langle\eta,\tilde{\eta}\rangle_{\ell^2}\geq 1$ and that $\langle\eta,\tilde{\eta}\rangle_{\ell^2}>1$ holds if and only if $\zeta^{(m)}$ and $\zeta^{(n)}$ are linearly dependent. Thus, $\langle\eta,\tilde{\eta}\rangle_{\ell^2}=1$ is only possible for $\zeta^{(n)}=-\zeta^{(m)}$.
\par
In case $\langle\eta,\tilde{\eta}\rangle_{\ell^2}>1$ we find (large enough) $k_l$ with $\langle\eta,\zeta^{(k_l)}\rangle_{\ell^2}>1$ or, equivalently $[B^\ast\eta]_{k_l}~>~1$. Thus, $B^\ast\,\eta\notin\partial\|\cdot\|_{\ell^1}(x_{mn})$, which shows that $x_{mn}$ can have at most one non-zero component $[x_{mn}]_k$ if $\zeta^{(l)}\neq-\zeta^{(k)}$ for all $l\neq k$. With only one non-zero component in $x_{mn}$ we obtain $Bx_{mn}=[x_{mn}]_k\zeta^{(k)}$.
\par
In case $\langle\eta,\tilde{\eta}\rangle_{\ell^2}=1$, we do not obtain a contradiction (at the moment). That is, $x_{mn}$ may have two non-zero components $[x_{mn}]_m$ and $[x_{mn}]_n$ as long as $\zeta^{(n)}=-\zeta^{(m)}$. But a third non-zero component $[x_{mn}]_l$ is not possible, because corresponding $\zeta^{(l)}$ would have to be equal to both $-\zeta^{(m)}$ and $-\zeta^{(n)}=\zeta^{(m)}$. Thus, $Bx_{mn}=([x_{mn}]_m-[x_{mn}]_n)\zeta^{(m)}$.
\end{proof}

Since the continuous operator $S: \ell^2 \to \ell^1$ with $BSy=y$ for all $y \in \ell^2$ from the Bartle-Graves Theorem
is not explicitly available, this $S$ is not useable  as a (nonlinear) pseudo-inverse for finding stable solutions to equation \eqref{eq:Beq}.
On the other hand, finite-dimensional approximations
$$B_n x:=\sum_{k=1}^n x_k\,\zeta^{(k)}$$
of $B$ and associated least-squares solutions based on the optimization problem
\begin{equation} \label{eq:Bfinite}
 \|B_n x-y^\delta\|^2_{\ell^2} \to \min, \quad \mbox{subject to} \; x \in \ell^1,
\end{equation}
are also not helpful due to the non-injectivity of $B$ and since the null-space $\mathcal{N}(B)$ is not available in an explicit manner. Namely, the finite set $(\zeta^{(1)},\zeta^{(2)},..., \zeta^{(n)})$ does in general not only contain linearly independent functions, and the dimension of the range $\mathcal{R}(B_n)$ may be small (e.g.~only 2) even if $n$ is very large.

The existence and stability deficits of minimizers $x_\alpha^\delta \in \ell^1$ to the optimization problem \eqref{eq:Bregmin} are essentially based on the fact that the hybrid-type operator $B$ fails to be weak$^*$-to-weak continuous as a typical property of type~I ill-posedness (cf.~\cite[Theorem~3.2]{FleHof25}), and consequently \cite[Proposition~3.1]{FleHof25}) does not apply. From a theoretic point of view there is a chance to overcome this drawback when the operator $B$ can be restricted in a reasonable manner to some non-surjective operator with dense range being ill-posed of type~II. We outline this in the following.

\begin{lemma}\label{lem:riesz_basis}
Consider a sequence $(\zeta^{(k)})_{k\in\mathbb{N}}$ that forms a countable dense subset of the unit sphere in $\ell^2$ with $\zeta^{(k)}\neq\zeta^{(l)}$ for $k\neq l$. Moreover let $(e^{(k)})_{k\in\mathbb{N}}$ denote the sequence of usual unit elements belonging to the Hilbert space $\ell^2$. Then there exist a sequence $(k_l)_{l\in\mathbb{N}}$ in $\mathbb{N}$ and an isomorphism $T:\ell^2\to\ell^2$ such that $\zeta^{(k_l)}=Te^{(l)}$ for all $l\in\mathbb{N}$.
\end{lemma}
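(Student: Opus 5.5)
Since $(\zeta^{(k)})_{k\in\mathbb{N}}$ is dense in the unit sphere of $\ell^2$, each unit vector $e^{(l)}$ is a limit of elements of the sequence. The plan is a small-perturbation argument: pick a subsequence $\zeta^{(k_l)}$ lying very close to $e^{(l)}$, and show that the resulting operator is a small perturbation of the identity.

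\emph{Selection of indices.} For each $l\in\mathbb{N}$ choose $k_l$ with $\|\zeta^{(k_l)}-e^{(l)}\|_{\ell^2}\le \varepsilon_l$, where $\varepsilon_l:=2^{-l-1}$ (any square-summable choice with $\sum_l\varepsilon_l^2<1$ will do). Such $k_l$ exist because every neighbourhood of a point of the sphere contains infinitely many members of the dense sequence, since the $\zeta^{(k)}$ are pairwise distinct and the sphere has no isolated points. This even allows the $k_l$ to be chosen strictly increasing, although the lemma does not require that.

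\emph{Definition of $T$.} Put $K:=T-I$ with
$$Kx:=\sum_{l=1}^\infty x_l\,(\zeta^{(k_l)}-e^{(l)}),$$
so that $T:=I+K$ satisfies $Te^{(l)}=\zeta^{(k_l)}$. By Cauchy--Schwarz,
$$\|Kx\|\le\sum_l|x_l|\,\varepsilon_l\le \|x\|_{\ell^2}\Bigl(\sum_l\varepsilon_l^2\Bigr)^{1/2}.$$
Hence $K$ is bounded (indeed Hilbert--Schmidt) with $\|K\|\le(\sum_l\varepsilon_l^2)^{1/2}<1/2<1$. By the Neumann series, $T=I+K$ is invertible with bounded inverse, so $T$ is an isomorphism of $\ell^2$ onto $\ell^2$.

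\emph{Main point.} The only step needing care is controlling the infinite sum uniformly: closeness of each $\zeta^{(k_l)}$ to $e^{(l)}$ alone is not sufficient. The summability of the errors $\varepsilon_l$ is exactly what makes $\|K\|<1$. With this choice the argument is routine.
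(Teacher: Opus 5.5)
Your proof is correct and follows essentially the same route as the paper: you choose $\zeta^{(k_l)}$ with square-summable distances to $e^{(l)}$ (the paper uses $\frac{1}{2l}$, you use $2^{-l-1}$) and conclude by a small-perturbation argument with constant $\lambda<1$. The only difference is that you carry out the Neumann-series argument for $T=I+K$ explicitly, whereas the paper checks the Paley--Wiener-type estimate on finite sums and cites Young's stability theorem to obtain $T$.
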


\begin{proof}
Since the sequence $(\zeta^{(k)})_{k\in\mathbb{N}}$ is dense in the unit sphere of $\ell^2$, we find some $k_l \in \mathbb{N}$ for each $l\in \mathbb{N}$ with
\begin{equation} \label{eq:close}
\|e^{(l)}-\zeta^{(k_l)}\|_{\ell^2}\leq\frac{1}{2l}.
\end{equation}
The corresponding sequence $(\zeta^{(k_l)})_{l\in \mathbb{N}}$ is pairwise disjoint, because $\|e^{(l_1)}-e^{(l_2)}\|_{\ell^2}=\sqrt{2}$ whenever $l_1 \not=l_2$. For each fixed $n\in\mathbb{N}$ and each $(c_1,\ldots,c_n)\in\mathbb{R}^n$ we have
\begin{equation}
\left\|\sum_{l=1}^n c_l\bigl(e^{(l)}-\zeta^{(k_l)}\bigr)\right\|_{\ell^2}
\leq\sum_{l=1}^n |c_l|\bigl\|e^{(l)}-\zeta^{(k_l)}\bigr\|_{\ell^2}
\leq\sqrt{\sum_{l=1}^n|c_l|^2}\sqrt{\sum_{l=1}^n\frac{1}{4l^2}}
\leq\lambda\sqrt{\sum_{l=1}^n|c_l|^2}
\end{equation}
with $\lambda<1$. The assertion of the lemma now follows from \cite[Thm.~13, page~41]{You80}, and we refer to the corresponding definition at page 31 ibid.
\end{proof}

\pagebreak

The subsequence $(\zeta^{(k_l)})_{l\in\mathbb{N}}$ in the lemma is an almost orthonormal basis in the sense of a \emph{Riesz basis}. It is even a \emph{Bari basis}. See \cite{You80} for definitions and context.

\begin{proposition} \label{pro:BU}
There exists a closed infinite-dimensional subspace $U\subseteq\ell^1$ isomorphic to $\ell^1$ such that the restriction $B|_U:\ell^1 \to \ell^2$ of $B$ to $U$ is injective and weak*-to-weak continuous and has dense range $\mathcal{R}(B|_U)$ in $\ell^2$, where the weak*-convergence in $\ell^1$ is understood with regard to the predual space $c_0$.
\end{proposition}

\begin{proof}
Take $(k_l)_{l\in\mathbb{N}}$ and $T$ from Lemma~\ref{lem:riesz_basis} and set
\begin{equation}\label{eq:span}
U:=\overline{\mathrm{span}\,\{e^{(k_l)}:l\in\mathbb{N}\}}^{\ell^1}.
\end{equation}
For $u=\sum_{l=1}^\infty u_l e^{(k_l)}\in U$ with $Bu=0$ we obtain
\begin{equation}
T\left(\sum_{l=1}^\infty u_l e^{(l)}\right)
=\sum_{l=1}^\infty u_l Te^{(l)}
=\sum_{l=1}^\infty u_l \zeta^{(k_l)}
=Bu=0
\end{equation}
and, thus, $\sum_{l=1}^\infty u_l e^{(l)}=0$, which is only possible if $u=0$, because $(e^{(l)})_{l\in\mathbb{N}}$ is an orthonormal basis in $\ell^2$. This proves injectivity of $B|_U$.
\par
To obtain weak*-to-weak continuity, due to \cite[Lemma~9.3]{Flemmingbuch18} it suffices to show that $Be^{(k_l)}$ converges weakly to zero for $l\to\infty$. We have $Be^{(k_l)}=\zeta^{(k_l)}=Te^{(l)}$ and that $(e^{(l)})_{l\in\mathbb{N}}$ converges weakly to zero in $\ell^2$ as $l \to \infty$. Because bounded linear operators are always  weak-to-weak continuous, this proves the assertion.
\par
$U$ obviously is isomorphic to $\ell^1$ and $T^{-1}B|_U$ is the embedding operator $\mathcal{E}_1^2$ from $\ell^1$ into $\ell^2$, the range of which is dense in $\ell^2$. Since the range of $B|_U$ is the image of this dense subspace with respect to the isomorphism $T$, this range is also dense.
\end{proof}

\begin{remark}
Both the lemma and the proposition remain true if $\ell^2$ is replaced by some separable Hilbert space. For $(e^{(l)})_{l\in\mathbb{N}}$ one may choose an arbitrary orthonormal basis.
\end{remark}

Since the operator $B$ restricted to a suitably chosen closed infinite-dimensional subspace $U$ is weak*-weak continuous between $\ell^1$ and $\ell^2$ as a consequence of Proposition~\ref{pro:BU},
the following corollary takes place.

\begin{corollary} \label{cor:Tik}
Let $y \in \mathcal{R}(B|_U)$ and $\|y^\delta-y\|_Y \le \delta$. Then minimizers $x_\alpha^\delta \in U \subseteq \ell^1$ of the optimization problem
\begin{equation} \label{eq:BU}
 \|B|_U\, x-y^\delta\|^2_{\ell^2}+\alpha \|x\|_{\ell^1} \to \min, \quad \mbox{subject to} \; x \in U \subseteq \ell^1,
\end{equation}
(i.e.~Tikhonov-regularized solutions) exist for arbitrary noisy data $y^\delta\in\ell^2$. Moreover, Theorem~9.4 from \cite{Flemmingbuch18} applies and yields convergent approximations when the noise level $\delta>0$ tends to zero.
\end{corollary}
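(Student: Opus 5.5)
The plan is to reduce the statement to the standard existence and convergence theory for Tikhonov regularization with a weak$^*$-to-weak continuous forward operator on a dual Banach space, via the isomorphism $U\cong\ell^1$ from Proposition~\ref{pro:BU}. Write $U$ as in \eqref{eq:span} and let $J:\ell^1\to U$, $J(v)=\sum_{l} v_l e^{(k_l)}$. Then $J$ is an isometric isomorphism, since $\|u\|_{\ell^1}=\sum_l|u_l|$ for $u\in U$. Moreover, $U$ is weak$^*$-closed in $\ell^1=(c_0)^*$: it consists of all $x\in\ell^1$ with $x_k=0$ for $k\notin\{k_l\}$, and these coordinate conditions are weak$^*$-continuous. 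So $U$ is itself a dual space, with predual $c_0/U_\perp\cong c_0$, and the optimization problem \eqref{eq:BU} is equivalent to
\begin{equation*}
\|\tilde B v-y^\delta\|^2_{\ell^2}+\alpha\|v\|_{\ell^1}\to\min,\quad v\in\ell^1,\qquad \tilde B:=B J=T\,\mathcal{E}_1^2 .
\end{equation*}

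For existence, I take a minimizing sequence $(v_n)$. Since the penalty is coercive, $\|v_n\|_{\ell^1}$ is bounded. Because $c_0$ is separable, Banach–Alaoglu yields a weak$^*$-convergent subsequence $v_n\rightharpoonup^* \bar v$. By Proposition~\ref{pro:BU} (equivalently, since $\mathcal{E}_1^2 e^{(l)}=e^{(l)}\rightharpoonup0$, using \cite[Lemma~9.3]{Flemmingbuch18}), $\tilde B v_n\rightharpoonup\tilde B\bar v$ weakly in $\ell^2$. Weak lower semicontinuity of the $\ell^2$-norm together with weak$^*$ lower semicontinuity of the $\ell^1$-norm, which holds because the $\ell^1$-norm is a dual norm, then shows that $\bar v$ is a minimizer. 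This works for every $y^\delta\in\ell^2$, and $x_\alpha^\delta:=J\bar v\in U$ is the required minimizer.

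For convergence, I check the hypotheses of \cite[Theorem~9.4]{Flemmingbuch18} for $\tilde B$: it is bounded, linear and weak$^*$-to-weak continuous, and $y\in\mathcal{R}(\tilde B)$ by the assumption $y\in\mathcal{R}(B|_U)$. Because $B|_U$ is injective, the $\ell^1$-norm-minimizing solution in $U$ is simply the unique preimage $x^\dagger=(B|_U)^{-1}y$. With an a priori choice $\alpha(\delta)\to0$ and $\delta^2/\alpha(\delta)\to0$, or with the discrepancy principle, the theorem gives $x_{\alpha(\delta)}^\delta\rightharpoonup^* x^\dagger$. Since $\|x_\alpha^\delta\|_{\ell^1}\to\|x^\dagger\|_{\ell^1}$, the Radon–Riesz-type property of $\ell^1$ for weak$^*$ convergence upgrades this to norm convergence.

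The main obstacle I expect is the bookkeeping that identifies $U$ with a dual space whose weak$^*$ topology is the one inherited from $\ell^1$. This identification is what allows the existence argument and Theorem~9.4 to be applied on $U$ rather than on all of $\ell^1$, where $B$ is not weak$^*$-to-weak continuous. Working through the isometry $J$ and the operator $\tilde B=T\mathcal{E}_1^2$ sidesteps this issue cleanly, because the theory is then applied directly on $\ell^1$.
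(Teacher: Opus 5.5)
Your proposal is correct and follows the paper's reasoning. The paper gives no separate proof: it rests the corollary on the weak$^*$-to-weak continuity of $B|_U$ from Proposition~\ref{pro:BU} and on the standard $\ell^1$-Tikhonov theory of \cite[Theorem~9.4]{Flemmingbuch18}, and your transfer to $\ell^1$ via the isometry $J$ and $\tilde B=T\mathcal{E}_1^2$ only makes explicit the identification $T^{-1}B|_U=\mathcal{E}_1^2$ already used in that proposition's proof, while the details you add (weak$^*$-closedness of $U$, the Banach--Alaoglu existence argument, the Kadec--Klee-type upgrade to norm convergence) are all sound.
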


\smallskip

The subspace $U$ has a simple and explicit structure described by (14)
and (16) as long as the sequence $(\zeta_k)_{k\in\mathbb{N}}$ is
explicitly accessible. For now, Mazur-type operators are not of
relevance in practice. Thus, it is not clear whether such
accessibility can be the case. But the results show how to apply
Tikhonov regularization to operators with uncomplemented null-space, in
principle.

\pagebreak

\section*{Appendix}

\textbf{Remarks on the case that} $\overline{\mathcal{R}(A)}^Y$ \textbf{is a proper subspace of} $Y$:
For the case that the closure of the range $\mathcal{R}(A)$ of the bounded linear operator $A$ is a proper subspace of $Y$, i.e.~$\overline{\mathcal{R}(A)}^Y \not=Y$, ill-posedness of type~I and type~II in the sense of Definition~\ref{def:new} may arise in the context of operator equations \eqref{eq:opeq}, but also well-posedness may occur.
We refer to \cite[Sect.~5]{HofKin25} for specific well-posedness and ill-posedness classification details in that case. The special feature here is that the handling of noisy data of the form $y^\delta \in Y \setminus \overline{\mathcal{R}(A)}^Y$ presents additional challenges.
If we consider a direct sum $Y=\overline{\mathcal{R}(A)}^Y \oplus V$, then a continuous linear projection $P:Y \to V$ exists if and only if $V$ is a closed subspace of $Y$, which means that
$\overline{\mathcal{R}(A)}^Y$ is complemented in $Y$. In the case that the closure of the range of $A$ is complemented in $Y$ ($V$ closed), a pseudo-inverse operator $A^\dagger_U: \mathcal{R}(A) \oplus V \to U$, where
$X=\mathcal{N}(A) \oplus U$, is defined with $A^\dagger_U v=0$ for all $v \in V$. But for $y^\delta \notin \mathcal{R}(A) \oplus V$ an element $A^\dagger_U y^\delta$ is not defined. We know this phenomenon from the Moore-Penrose pseudo-inverse in the Hilbert space setting.

\bigskip

\parindent0em{ \textbf{Remarks on quotient maps and its consequences:}}

Let $Q:X\to X/\mathcal{N}(A)$ be the quotient map w.\,r.\,t.\ $\mathcal{N}(A)$ and let $\tilde{A}:X/\mathcal{N}(A)\to Y$ be the injective bounded linear operator such that $A=\tilde{A}Q$. Further let $U$ be a subspace of $X$ such that $X=\mathcal{N}(A)\oplus U$. Denote by $A_U^\dagger:\mathcal{R}(A)\to U$ the corresponding pseudo-inverse of $A$, by $\tilde{A}^{-1}:\mathcal{R}(A)\to X$ the inverse of $\tilde{A}$, and by $Q_U^\dagger:X/\mathcal{N}(A)\to U$ the pseudo-inverse of $Q$ w.\,r.\,t.\ $U$.

\begin{proposition}\label{pro:appendix}
For the operators introduced above we have:
\begin{itemize}
\item[(i)]
$A_U^\dagger$ is bounded if and only both $\tilde{A}^{-1}$ and $Q_U^\dagger$ are bounded.
\item[(ii)]
$\tilde{A}^{-1}$ is bounded if and only if $\mathcal{R}(A)$ is closed.
\item[(iii)]
$\;Q_U^\dagger$ is bounded if and only if $U$ is closed (i.e.~if $\mathcal{N}(A)$ is complemented).
\end{itemize}
\end{proposition}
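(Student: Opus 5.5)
The plan is to use the factorization $A=\tilde A Q$ and its compatibility with the pseudo-inverses, and then treat the three items in the order (ii), (iii), (i).

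\textbf{Identities.} Since $Q|_U:U\to X/\mathcal{N}(A)$ is a linear bijection, $Q_U^\dagger=(Q|_U)^{-1}$. For $y\in\mathcal{R}(A)$ the element $A_U^\dagger y\in U$ is the unique $u\in U$ with $Au=y$. Hence $\tilde A(Qu)=y$, so $Qu=\tilde A^{-1}y$ and
\[
A_U^\dagger=Q_U^\dagger\,\tilde A^{-1}\qquad\text{on }\mathcal{R}(A).
\]
Conversely, composing with $Q$ gives $QA_U^\dagger=\tilde A^{-1}$. Composing with $\tilde A$ gives $A_U^\dagger\tilde A=Q_U^\dagger$ on $X/\mathcal{N}(A)$: for a class $[x]$ we have $\tilde A[x]=Ax=Au$, where $u$ is the $U$-component of $x$, and $Q_U^\dagger[x]=u$.

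\textbf{Item (i).} If $\tilde A^{-1}$ and $Q_U^\dagger$ are bounded, the first identity shows that $A_U^\dagger$ is bounded. Conversely, if $A_U^\dagger$ is bounded, then $\tilde A^{-1}=QA_U^\dagger$ is bounded because $\|Q\|\le1$. Likewise $Q_U^\dagger=A_U^\dagger\tilde A$ is bounded because $\tilde A$ is bounded.

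\textbf{Item (ii).} The operator $\tilde A$ is an injective bounded operator from the Banach space $X/\mathcal{N}(A)$ onto $\mathcal{R}(A)$. If $\mathcal{R}(A)$ is closed, it is a Banach space, and the bounded inverse theorem makes $\tilde A^{-1}$ bounded. Conversely, if $\tilde A^{-1}$ is bounded, then $\tilde A$ is bounded below, so its range is complete and therefore closed in $Y$.

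\textbf{Item (iii).} Suppose $U$ is closed. Then $Q|_U$ is a bounded bijection between Banach spaces, and the bounded inverse theorem gives boundedness of $Q_U^\dagger$. Conversely, suppose $Q_U^\dagger$ is bounded. Then $P:=Q_U^\dagger Q$ is a bounded linear map on $X$ with $P|_U=\mathrm{id}$ and $P|_{\mathcal{N}(A)}=0$, so $P$ is the projection onto $U$ along $\mathcal{N}(A)$. Hence $U=\mathcal{N}(I-P)$ is closed, and $\mathcal{N}(A)$ is complemented. The parenthetical ``i.e.'' needs care: complementedness of $\mathcal{N}(A)$ means that \emph{some} complement is closed, while item (iii) concerns the given $U$. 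So I will read (iii) as ``$U$ is closed, in which case $\mathcal{N}(A)$ is complemented by $U$.''

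There is no real obstacle. The only delicate point is checking that the identities hold on the correct domains, namely $\mathcal{R}(A)$ versus all of $Y$, so that the composition arguments in (i) are legitimate.
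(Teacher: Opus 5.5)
Your proposal is correct and follows essentially the same route as the paper. For (i) you use the same identities $A_U^\dagger=Q_U^\dagger\tilde A^{-1}$, $\tilde A^{-1}=QA_U^\dagger$ and $Q_U^\dagger=A_U^\dagger\tilde A$, and for (ii) and (iii) the bounded inverse theorem plus a completeness argument. The only cosmetic difference is in the converse of (iii): you exhibit the bounded projection $Q_U^\dagger Q$ onto $U$ along $\mathcal{N}(A)$, whereas the paper argues that $U$ is complete because $Q|_U$ is an isomorphism onto the Banach space $X/\mathcal{N}(A)$. Your caveat about the ``i.e.'' in (iii) is also well taken.
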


\begin{proof}
For the non-trivial implication in (ii) observe that $\tilde{A}^{-1}=QA_U^\dagger$ and $Q_U^\dagger=A_U^\dagger\tilde{A}$.
\par
That closedness of $\mathcal{R}(A)$ implies boundedness of $\tilde{A}^{-1}$ in (ii) is a direct consequence of the open mapping theorem. On the other hand, if $\tilde{A}^{-1}$ is bounded, then $\tilde{A}$ is an isomorphism. Thus, $\mathcal{R}(A)$ is a Banach space, because $X/\mathcal{N}(A)$ is a Banach space.
\par
Boundedness of $Q_U^\dagger$ in (iii) is again a consequence of the open mapping theorem. On the other hand, if $Q_U^\dagger$ is bounded, then the restriction $Q|_U$ of $Q$ to $U$ is an isomorphism between $U$ and $X/\mathcal{N}(A)$. Because $X/\mathcal{N}(A)$ is a Banach, then $U$ is also a Banach space.
\end{proof}


\begin{thebibliography}{99}

\bibitem{AlbKal06}
F.~Albiac and N.~J.~Kalton.
\newblock Topics in Banach Space Theory.
\newblock {\em Graduate
  Texts in Mathematics}, vol. 233, Springer, New York, 2006.


\bibitem{Mazur33}
S.~Banach and S.~Mazur.
\newblock  Zur Theorie der linearen Dimension.
\newblock {\em Studia Mathematica}, 4(1):100--112, 1933.

\bibitem{DoRo14}
A.~L.~Dontchev and R.~T.~Rockafellar.
\newblock {\em Implicit Functions and Solution Mappings} (2nd Ed.).
\newblock Springer Series in Operations Research and
  Financial Engineering, Springer, New York, 2014.
\newblock A view from variational analysis.

\bibitem{Flemmingbuch18}
J.~Flemming.
\newblock {\em Variational Source Conditions, Quadratic Inverse Problems,
  Sparsity Promoting Regularization}.
\newblock Frontiers in Mathematics. Birkh\"{a}user/Springer, Cham, 2018.
\newblock New results in modern theory of inverse problems and an application
  in laser optics.


\bibitem {FleHof25}
J.~Flemming and B.~Hofmann.
\newblock New aspects of ill-posedness classification in Banach spaces.
\newblock Paper submitted to {\em Communications in Optimization Theory}, 2026.
\newblock Preprint: arXiv:2511.06690 (Nov.~2025).

\bibitem{GoldThorp63}
S.~Goldberg and E~Thorp.
\newblock On some open questions concerning strictly singular operators.
\newblock {\em Proc.~Amer.~Math.~Soc.}, 14:334--336, 1963.

\bibitem {HofKin25}
B.~Hofmann and S.~Kindermann.
\newblock Classification of ill-posedness for bounded linear operators in {B}anach spaces.
\newblock American Mathematical Society (to appear 2026).
\newblock Preprint: arXiv:2505.12931 (May 2025).

\bibitem{Ivanov63}
V.~K. Ivanov.
\newblock On ill-posed problems (Russian).
\newblock {\em Mat. Sbornik (New Series)}, 61(103):211--223, 1963.

\bibitem{Kato58}
T.~Kato.
\newblock Perturbation theory for nullity, deficiency and other quantities of linear operators.
\newblock {\em J. Analyse Math.}, 6:261--322, 1958.


\bibitem{Meg98}
R.E.~Megginson.
\newblock An Introduction to Banach Space Theory.
\newblock {\em Graduate
  Texts in Mathematics}, vol. 183, Springer, New York, 1998.


\bibitem{Nashed86}
M.~Z.~Nashed.
\newblock A new approach to classification and regularization of ill-posed
  operator equations.
\newblock In {\em {I}nverse and {I}ll-posed {P}roblems ({S}ankt {W}olfgang,
  1986), volume~4 of Notes Rep.~Math.~Sci.~Engrg.}, pages 53--75. Academic
  Press, Boston, MA, 1987.

\bibitem{Schusterbuch12}
T.~Schuster, B.~Kaltenbacher, B.~Hofmann, and K.~S. Kazimierski.
\newblock {\em Regularization methods in {B}anach spaces}, volume~10 of {\em
  Radon Series on Computational and Applied Mathematics}.
\newblock Walter de Gruyter, Berlin, Boston, 2012.

\bibitem{Tao09b}
T.~Tao.
\newblock  245B, Notes 9: The Baire category theorem and its Banach space consequences.
\newblock \url{https://terrytao.wordpress.com/2009/02/01/245b-notes-9-the-baire-category-theorem-and-its-banach-space-consequences/}, 2009.
\newblock "Online accessed 4-Feb-2025".



\bibitem{TikLeoYag98}
A.~N.~Tikhonov, A.~S.~Leonov, and A.~G.~Yagola.
\newblock {\em  Nonlinear Ill-Posed Problems, Volume~1}.
\newblock Chapman \& Hall, London, New York, 1998.


\bibitem{YaKo13}
A.~G.~Yagola and Y.~M.~Korolev.
\newblock Error estimation in ill-posed problems in special cases.
\newblock In {\em Applied Inverse Problems}, Springer Proc.~Math.~Stat., vol.~48, pages 155--164.
\newblock Springer, New York, 2013.


\bibitem{Yosida80}
K.~Yosida.
\newblock {\em Functional Analysis} (6th ed.).
\newblock Springer-Verlag, Berlin-New York, 1980.


\bibitem{You80}
R.~M.~Young.
\newblock An Introduction to Nonharmonic Fourier Series (Chapter~1: Bases in Banach Spaces).
\newblock In {\em Pure and Applied Mathematics}, vol.~93, pages 1--51.
\newblock Academic Press, New York, 1980.



\end{thebibliography}
\end{document}